\theoremstyle{plain} 
\newtheorem{theorem}{Theorem}
\newtheorem{corollary}[theorem]{Corollary}
\newtheorem*{cor2a}{Corollary~2a}
\theoremstyle{definition} 
\newtheorem{definition}[theorem]{Definition}
\newtheorem*{definition*}{Definition}
\theoremstyle{definition} 
\newtheorem{ex}[theorem]{Example}
\newtheorem*{ex*}{Example}
\theoremstyle{remark} 
\theoremstyle{remark} 
\newtheorem*{remark*}{Remark}
\def\subsubsubsection{\@startsection{subsubsubsection}{4}{\z@}{-3.25ex plus -1ex minus -.2ex}{1.5ex plus .2ex}{\normalsize}}
\newcommand{\beqa}{\begin{eqnarray}}
\newcommand{\eeqa}{\end{eqnarray}}
\newcommand{\bseq}{\begin{subequations}}
\newcommand{\eseq}{\end{subequations}}
\newcommand{\epi}{{\,\operatorname{epi}}}
\newcommand{\dom}{{\,\operatorname{dom}}}
\newcommand{\card}{\operatorname{card}}
\newcommand{\si}{\sigma}
\newcommand{\la}{\lambda}
\newcommand{\Nu}{\mathrm{N}}
\newcommand{\de}{\delta}
\renewcommand{\Psi}{\overline{\Phi}}
\newcommand{\E}{\operatorname{\mathsf{E}}}
\newcommand{\R}{\mathbb{R}}
\newcommand{\CC}{\mathcal{C}}
\newcommand{\vp}{\varepsilon}
\newcommand{\tf}{{\tilde{f}}}
\renewcommand{\le}{\leqslant}
\renewcommand{\ge}{\geqslant}
\begin{document}

\begin{frontmatter}

\title{A necessary and sufficient condition on the stability of the infimum of convex functions}
\runtitle{Infimum of convex functions}

%

\begin{aug}
\author{\fnms{Iosif} \snm{Pinelis}\thanksref{t2}\ead[label=e1]{ipinelis@mtu.edu}}
  \thankstext{t2}{Supported by NSA grant H98230-12-1-0237}
\runauthor{Iosif Pinelis}


\address{Department of Mathematical Sciences\\
Michigan Technological University\\
Houghton, Michigan 49931, USA\\
E-mail: \printead[ipinelis@mtu.edu]{e1}}
\end{aug}

\begin{abstract}
Let us say that 
a convex  
function $f\colon C\to[-\infty,\infty]$ on a convex set $C\subseteq\R$ is infimum-stable if, for any sequence $(f_n)$ of convex functions  
$f_n\colon C\to[-\infty,\infty]$ converging to $f$ pointwise, one has  
$\inf\limits_C f_n\to\inf\limits_C f$. 
A simple necessary and sufficient condition for a convex function  
to be infimum-stable is given. 
The same condition remains necessary and sufficient if one uses Moore--Smith nets $(f_\nu)$ in place of sequences $(f_n)$.  
This note is motivated by certain applications to stability of measures of risk/inequality in finance/economics. 
\end{abstract}

  
%

\setattribute{keyword}{AMS}{AMS 2010 subject classifications:}

\begin{keyword}[class=AMS]
\kwd[Primary ]
{26A51}
\kwd{90C25}
\kwd[; secondary ]
{49J45}
\kwd{49K05}
\kwd{49K30}
\end{keyword}


%
%


\begin{keyword}
\kwd{convex functions}
\kwd{minimization}
\kwd{stability}
\kwd{convergence}
\kwd{Legendre--Fenchel transform}
\end{keyword}

\end{frontmatter}

\settocdepth{chapter}

\tableofcontents 

\settocdepth{subsubsection}

\theoremstyle{plain} 


\section{Summary and discussion}\label{intro} 

In this paper, the general notion of the convexity of functions is assumed, following \cite{rocka}. Namely, let $C$ be any convex subset of $\R$, and then 
take any function $f\colon C\to[-\infty,\infty]$. 
The function $f$ is called convex if its epigraph 
\begin{equation*}
	\epi f:=\{(x,\tau)\in C\times\R\colon\tau\ge f(x)\} 
\end{equation*}
is a convex set. 
The effective domain of 
$f$ 
is 
\begin{equation*}
	\dom f:=\{x\in C\colon f(x)<\infty\}. 
\end{equation*}

The convexity of $f$ 
can also be expressed by the usual inequality  
\begin{equation}\label{eq:conv}
	f\big((1-\la)x+\la y\big)\le(1-\la)f(x)+\la f(y)  
\end{equation}
for all $x\in C$, $y\in C$, and $\la\in(0,1)$ -- but with the exception of the case when $\{f(x),f(y)\}=\{\infty,-\infty\}$, that is, when $f(x)$ and $f(y)$ are infinite values of opposite signs; cf.\ e.g.\ \cite[Theorem~4.1]{rocka}. 
Equivalently, one may require \eqref{eq:conv} only for all $x$ and $y$ in $\dom f$ \big(and still for all $\la\in(0,1)$\big).

Generally, whenever possible, let us allow variables in expressions to take infinite values. At that, the corresponding values of the expressions are defined ``by continuity'', as usual; e.g., the value of the expression $\sqrt{\rho-\si-1}$ at $(\rho,\si)=(\infty,-\infty)$ should be understood as $\lim\limits_{\substack{\rho\to\infty,\\ \si\to-\infty}}\sqrt{\rho-\si-1}=\infty$. 



One can now give 

\begin{definition}\label{def:inf-stable}
Say that the 
function $f
$ is \emph{(sequentially) infimum-stable} if, for any sequence $(f_n)$ of convex functions 
$f_n\colon C\to[-\infty,\infty]$ converging to $f$ pointwise, one has  
$$\inf\limits_C f_n\to\inf\limits_C f.$$ 
\end{definition}

As usual, assume the conventions $\inf\emptyset:=\infty$ and $\sup\emptyset:=-\infty$. 
Also as usual, let the symbol $\card$ denote the cardinality of the given set. 

If $\card C\le1$ then, obviously, $f$ is necessarily infimum-stable. 
To avoid this triviality, let us further assume that $\card C>1$, so that $C$ is an interval with endpoints 
\begin{equation*}
	c_{\min}:=\inf C\quad\text{and}\quad c_{\max}:=\sup C, 
\end{equation*}
and at that 
\begin{equation}\label{eq:cmin<cmax}
	-\infty\le c_{\min}<c_{\max}\le\infty. 
\end{equation}

Let us also say that $f$ 
is monotonic if $f$ is nondecreasing or nonincreasing. 

\begin{center}***\end{center}

We shall provide a simple necessary and sufficient condition for a convex function on $C$ 
to be infimum-stable. The form of this condition depends on 
which (if any) 
of the endpoints $c_{\min}$ and $c_{\max}$ of the interval $C$ are infinite, so that there are four possible cases here. 
The main case, when $c_{\min}=-\infty$ and $c_{\max}=\infty$, is described by 

\begin{theorem}\label{th:}
Suppose that $C=\R$ and the function $f$ is convex. Then the following two conditions are equivalent to each other: 
\begin{enumerate}[(I)]
	\item $f$ is infimum-stable; 
	\item either (i) $\card\dom f>1$ and at that $f$ is not monotonic or (ii) $\inf\limits_\R f=-\infty$. 	
\end{enumerate} 
\end{theorem}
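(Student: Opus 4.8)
The plan is to separate the one free inequality from the two genuine implications. For any sequence $(f_n)$ converging pointwise to $f$ one has $\inf_\R f_n\le f_n(x)\to f(x)$ for every $x\in\R$, whence $\limsup_n\inf_\R f_n\le\inf_\R f=:m$; thus $f$ is infimum-stable if and only if $\liminf_n\inf_\R f_n\ge m$ for every such sequence. Condition (ii) is then immediate: if $m=-\infty$, choosing $x$ with $f(x)$ arbitrarily negative and using $f_n(x)\to f(x)$ forces $\inf_\R f_n\to-\infty$. Hence I may assume $m$ finite, apart from the degenerate subcase $\dom f=\emptyset$ (so $m=+\infty$), which satisfies neither (i) nor (ii) and is shown non-stable below.

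For $(\mathrm I)\Rightarrow(\mathrm{II})$ I argue by contraposition. If (II) fails then $m$ is finite and either $\card\dom f\le1$ or $f$ is monotonic, and in each case I exhibit a convex $f_n\to f$ with $\inf_\R f_n\not\to m$. If $f$ is monotonic and $\not\equiv+\infty$, then $\dom f$ is a half-line; for nondecreasing $f$ it is a left half-line on which $f(x)\to m$ as $x\to-\infty$, so the convex tilt $f_n(x):=f(x)+x/n$ converges pointwise to $f$ while $\inf_\R f_n=-\infty$ (the nonincreasing case is symmetric, with $-x/n$). If $\card\dom f\le1$, a narrow parabola with vertex slightly displaced from the single finite point $x_0$ and depressed below $m$ yields a convex $f_n\to f$ with $\inf_\R f_n\to m-1\ne m$; when $\dom f=\emptyset$ the translates $f_n(x):=|x-n|$ converge pointwise to $+\infty$ with $\inf_\R f_n=0$. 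In every case stability fails.

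The substantive direction is $(\mathrm{i})\Rightarrow(\mathrm I)$ with $m$ finite. Since $f$ is not monotonic, \eqref{eq:conv} forces $f(x)\to+\infty$ as $x\to\pm\infty$ (in the extended sense); hence every sublevel set $\{f\le m+\rho\}$ is bounded and nonempty, and $f$ attains values arbitrarily close to $m$ on a bounded set $K$. I would prove $\liminf_n\inf_\R f_n\ge m$ by contradiction: otherwise, along a subsequence there are near-minimizers $y_n$ with $f_n(y_n)\le m-\varepsilon$. The slope-monotonicity form of \eqref{eq:conv}, applied against two fixed reference points in an increasing wing, shows the $y_n$ cannot run off to $\pm\infty$, so after a further subsequence $y_n\to y^*\in\R$. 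Choosing two fixed points $\xi,\eta$ with $f(\xi),f(\eta)$ close to $m$ and lying between $y^*$ and $K$, and applying \eqref{eq:conv} to the triple $y_n,\xi,\eta$, I bound $\liminf_n f_n(y_n)$ below by the value at $y^*$ of the chord through $(\xi,f(\xi))$ and $(\eta,f(\eta))$; as $\xi,\eta$ approach the infimal level this bound tends to $m$, contradicting $f_n(y_n)\le m-\varepsilon$.

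The main obstacle, and where care is needed, is the boundary behaviour of $\dom f$. The chord argument is transparent when $y^*\in\inter\dom f$, but the infimal region, or the limit point $y^*$, may sit at an endpoint of $\dom f$, where $f$ is finite on only one side, so that no finite reference point lies between $y^*$ and $K$. I resolve this by a complementary use of \eqref{eq:conv}: when $y^*\notin\dom f$ I fix a point $\zeta$ strictly between $y^*$ and $\dom f$, so that $f_n(\zeta)\to+\infty$, while \eqref{eq:conv} bounds $f_n(\zeta)$ above by a convex combination of the finite quantities $f_n(y_n)\le m-\varepsilon$ and $f_n(\theta)$ for a fixed finite reference $\theta$ beyond $\zeta$, which stays bounded — a contradiction. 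The same device, with $\zeta$ chosen where $f=+\infty$, also barricades the escape of $y_n$ toward a one-sidedly infinite end and disposes of the degenerate case $\dom f=\emptyset$. Combining the free inequality with these lower bounds yields $\inf_\R f_n\to m$, completing $(\mathrm{i})\Rightarrow(\mathrm I)$ and hence the theorem.
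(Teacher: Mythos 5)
Your overall architecture is sound and largely parallels the paper: the free inequality $\limsup_n\inf_\R f_n\le m$, the immediate disposal of case (ii), and the contrapositive direction are all correct, and your three counterexample families (the tilt $f(x)+x/n$ for monotonic $f$, a displaced narrow parabola for $\card\dom f=1$, translates for $\dom f=\emptyset$) match the paper's constructions ($f(x)+x/n$, $|1+nx|$, $x+n$) up to cosmetics. The coercivity claim, the barricading of $y_n$ against escape to $\pm\infty$ (including the jump variant with $f(\zeta)=\infty$), and the strictly-exterior case $y^*\notin\overline{\dom f}$ via the $\zeta$-device are also fine. Your subsequential localization ($y_n\to y^*$, then a local chord bound) is a genuinely different organization from the paper, which instead proves uniform lower bounds for $f_n$ on the three pieces $[u-1,y]$, $[y,x_{++}]$, $(x_{++},\infty)$ of a fixed interval.

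However, there is a genuine gap, and it sits exactly at the crux of the theorem: the case where $y^*$ is an \emph{endpoint} of $\overline{\dom f}$. Your case split covers $y^*\in\inter\dom f$ and $y^*\notin\dom f$ with a point $\zeta$ \emph{strictly between} $y^*$ and $\dom f$; but if, say, $f(x)=-x$ on $[0,1)$ and $f=\infty$ elsewhere, with $y_n\to y^*=1$, then no such $\zeta$ exists (the distance from $y^*$ to $\dom f$ is zero), and no reference points lie ``between $y^*$ and $K$'' either, since the sublevel set $K=\{f\le m+\rho\}$ abuts $y^*$. This is precisely the configuration the paper's machinery ($x_+$, $p=f\big((x_+)-\big)$, and the points $w,y,z,x_{++}$) is built to handle, and it is where the theorem is delicate: it is the configuration of the $|1+nx|$ counterexample when $\card\dom f\le1$. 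Compounding this, your key quantitative claim --- ``as $\xi,\eta$ approach the infimal level this bound tends to $m$'' --- is a non sequitur as stated: the chord through $(\xi,f(\xi))$, $(\eta,f(\eta))$ evaluated at $y^*$ beyond $\eta$ equals $f(\eta)+\frac{y^*-\eta}{\eta-\xi}\big(f(\eta)-f(\xi)\big)$, and closeness of $f(\xi),f(\eta)$ to the infimal level does not control the ratio; if $\eta-\xi$ is much smaller than $f(\xi)-f(\eta)$ the bound is useless. One must \emph{couple} the level-closeness to the separation, which is exactly what the paper does in \eqref{eq:z} and \eqref{eq:w,y,z} (choosing $f(z)<p+\varepsilon^2$ while $z-y>\varepsilon/2$, so the slope drop is at most $2\varepsilon$). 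Alternatively --- and this would both repair your argument and cover the endpoint case --- use that one-sided derivatives of a convex function are nondecreasing: anchoring $\xi<\eta$ in a fixed $[\xi_0,y^*)\subset\inter\dom f$ gives chord slopes bounded below by $f'_+(\xi_0)>-\infty$, whence eventually $f_n(y_n)\ge f_n(\eta)-\big(|f'_+(\xi_0)|+1\big)(y_n-\eta)$ with $f_n(\eta)\to f(\eta)\ge m$ and $y_n-\eta\to y^*-\eta$, which can be made arbitrarily close to $m$ by taking $\eta$ near $y^*$; your localization makes this constant slope bound suffice, whereas the paper's uniform-on-an-interval estimates force its finer $\varepsilon^2$ bookkeeping. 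As written, though, both the endpoint case and the quantitative chord step are missing, and they are the heart of the proof; the remaining extended-value technicalities (applying \eqref{eq:conv} only where the anchors are eventually finite, as the paper does by restricting to $f_n(x)<\infty$) are routine but should also be acknowledged.
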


The proofs are given in Section~\ref{proofs}. 

The remaining three cases concerning the finiteness of $c_{\min}$ and/or $c_{\max}$ are presented in the following three corollaries. 

\begin{corollary}\label{cor:fin,fin}
Suppose that $-\infty<c_{\min}<c_{\max}<\infty$ and the function $f$ is convex. Then the following two conditions are equivalent to each other: 
\begin{enumerate}[(I)]
	\item $f$ is infimum-stable; 
	\item either (i) $\card\dom f>1$ or (ii) $\inf\limits_C f=-\infty$. 	 	
\end{enumerate} 
\end{corollary}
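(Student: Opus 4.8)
The plan is to reduce the bounded-interval case to the already-established Theorem~\ref{th:} by extending $f$ to all of $\R$. Define $\tf\colon\R\to[-\infty,\infty]$ by $\tf:=f$ on $C$ and $\tf:=\infty$ on $\R\setminus C$; since $C$ is an interval, $\epi\tf=\epi f$ is convex in $\R^2$, so $\tf$ is convex on $\R$. Three facts are immediate: $\inf_\R\tf=\inf_C f$; $\dom\tf=\dom f$; and, crucially, whenever $\dom f\neq\emptyset$ the function $\tf$ is \emph{not} monotonic, because $\tf=\infty$ on both nonempty tails $(-\infty,c_{\min})$ and $(c_{\max},\infty)$ while $\tf$ is finite at some point of $C$ in between. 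The one-directional reduction I will use is: \emph{if $\tf$ is infimum-stable on $\R$, then $f$ is infimum-stable on $C$.} To see this, given convex $f_n\to f$ pointwise on $C$, extend each $f_n$ by $\infty$ outside $C$ to get convex $\tf_n\to\tf$ pointwise on $\R$ with $\inf_\R\tf_n=\inf_C f_n$; infimum-stability of $\tf$ then yields $\inf_C f_n=\inf_\R\tf_n\to\inf_\R\tf=\inf_C f$.

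For (II)$\Rightarrow$(I), I will check that (II) forces condition (II) of Theorem~\ref{th:} for $\tf$. If (ii) holds then $\inf_\R\tf=\inf_C f=-\infty$; if (i) holds then $\card\dom\tf=\card\dom f>1$ and, since $\dom f\neq\emptyset$, $\tf$ is non-monotonic. Either way $\tf$ (a convex function on $\R$) satisfies condition (II) of Theorem~\ref{th:}, hence is infimum-stable on $\R$, and the reduction above gives that $f$ is infimum-stable on $C$.

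For (I)$\Rightarrow$(II) I argue by contraposition, assuming the negation of (II): $\card\dom f\le1$ and $\inf_C f>-\infty$. Then either $f\equiv\infty$, or $f$ is finite (value $v\in\R$) at a single point $x_0$ and $\infty$ elsewhere. In each case I exhibit convex $f_n\to f$ pointwise whose infima do not converge to $\inf_C f$, using a V-shape with a \emph{moving kink}: take $f_n(x):=a+n^2\,|x-p_n|$ with $p_n\in C$ and $p_n\to x_*$, where $x_*$ is an interior point of $C$ in the case $f\equiv\infty$ and $x_*:=x_0$ otherwise. The factor $n^2$ forces $f_n(x)\to\infty$ at every fixed $x\neq x_*$, while calibrating the approach speed $|x_*-p_n|$ makes $f_n(x_*)\to f(x_*)$ (take $p_n=x_*+1/n$ when $f(x_*)=\infty$, and $|x_*-p_n|=1/n^2$ with $a=v-1$ when $f(x_*)=v$); thus $f_n\to f$ pointwise. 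But $\inf_C f_n=a$ is attained at $p_n\in C$, and $a$ lies strictly below $\inf_C f$ (any finite $a$ when $f\equiv\infty$, and $a=v-1$ in the one-point case), so $\inf_C f_n\not\to\inf_C f$ and $f$ is not infimum-stable. When $x_0$ is an endpoint of $C$, one simply takes $p_n\to x_0$ from inside $C$.

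The genuine obstacle is that the reduction is only one-directional: a perturbation of $\tf$ on $\R$ may take arbitrarily negative values on the nonempty set $\R\setminus C$, so infimum-stability of $f$ on $C$ does \emph{not} transfer back to $\tf$, and (I)$\Rightarrow$(II) cannot be obtained from Theorem~\ref{th:} alone---it genuinely requires the explicit moving-kink construction. The remaining care is bookkeeping: verifying that $\tf$ is non-monotonic (this is exactly where boundedness of $C$ enters, and explains why the ``not monotonic'' clause of Theorem~\ref{th:} disappears in the present bounded case), and checking that the kink points $p_n$ can be kept in $C$ with approach speeds calibrated so that $f_n\to f$ pointwise.
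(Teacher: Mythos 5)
Your proposal is correct and takes essentially the same route as the paper: the implication (II)$\implies$(I) via extension by $\infty$ to $\R$ and reduction to Theorem~\ref{th:} (with the same observation that boundedness of $C$ forces the extension $\tf$ to be non-monotonic), and (I)$\implies$(II) by contraposition using V-shaped convex functions with a kink moving toward the relevant point --- which is exactly the paper's construction $f_n(x)=n|1+nx|$ (empty domain case) and $f_n(x)=|1+nx|$ (singleton case), up to the paper's normalization $x_0=0$, $f(x_0)=1$ versus your direct calibration of the kink distance $|x_*-p_n|$ against the slope $n^2$. All your verifications (pointwise convergence, kinks eventually inside $C$, $v\in\R$ from $\inf_C f>-\infty$) check out, so there is nothing to fix.
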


\begin{corollary}\label{cor:-inf,fin}
Suppose that $-\infty=c_{\min}<c_{\max}<\infty$ and the function $f$ is convex. Then the following two conditions are equivalent to each other: 
\begin{enumerate}[(I)]
	\item $f$ is infimum-stable; 
	\item either (i) $\card\dom f>1$ and at that $f$ is not nondecreasing or (ii) $\inf\limits_C f=-\infty$. 	 	
\end{enumerate} 
\end{corollary}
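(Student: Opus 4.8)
The plan is to prove the two implications separately. One is tempted to reduce to Theorem~\ref{th:} by extending $f$ to all of $\R$ via the value $+\infty$ on $[c_{\max},\infty)$; this works formally but is delicate, since a convex sequence on $\R$ converging to the (possibly non-lower-semicontinuous) extension can a priori misbehave to the right of $c_{\max}$. I would instead argue directly, reducing the behavior on the finite side to Corollary~\ref{cor:fin,fin}.

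For (II)$\Rightarrow$(I) I first dispose of (ii): if $\inf\limits_C f=-\infty$, then for every $M$ there is $x_M\in C$ with $f(x_M)<-M$, so $\inf\limits_C f_n\le f_n(x_M)\to f(x_M)<-M$ for any $f_n\to f$ pointwise; letting $M\to\infty$ yields $\inf\limits_C f_n\to-\infty$. (This step is identical in all four cases.) For (i), the role of ``not nondecreasing'' is to confine the infimum to the finite endpoint. Since $f$ is convex and not nondecreasing, pick $u<v$ in $\dom f$ with $f(u)>f(v)$; convexity then gives $f(x)>f(u)$ for $x<u$, so $\inf\limits_C f=\inf\limits_{[u,c_{\max})}f$. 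Given convex $f_n\to f$ pointwise, for all large $n$ one has $f_n(u)>f_n(v)$, and convexity of $f_n$ forces $f_n(x)\ge f_n(u)$ for $x\le u$, hence $\inf\limits_{(-\infty,u]}f_n=f_n(u)\to f(u)$. On the bounded interval $[u,c_{\max})$ the restriction of $f$ has at least two points in its effective domain, so by Corollary~\ref{cor:fin,fin} it is infimum-stable and $\inf\limits_{[u,c_{\max})}f_n\to\inf\limits_C f$. Since $\inf\limits_C f_n=\min\bigl(\inf\limits_{(-\infty,u]}f_n,\ \inf\limits_{[u,c_{\max})}f_n\bigr)$ and $f(u)\ge\inf\limits_C f$, continuity of $\min$ gives $\inf\limits_C f_n\to\inf\limits_C f$.

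For the converse I prove the contrapositive: assuming (II) fails, so $\inf\limits_C f>-\infty$ and either $\card\dom f\le1$ or $f$ is nondecreasing, I exhibit convex $f_n\to f$ pointwise with $\inf\limits_C f_n\not\to\inf\limits_C f$. If $f$ is nondecreasing and $\dom f\ne\emptyset$, then $a:=\inf\limits_C f$ is finite and $f(x)\to a$ as $x\to-\infty$; the tilted functions $f_n:=f+\tfrac1n(\,\cdot-c_{\max})$ are convex, converge to $f$ pointwise, yet $f_n(x)\to-\infty$ as $x\to-\infty$, so $\inf\limits_C f_n=-\infty$. If $\dom f=\emptyset$ (so $f\equiv\infty$), the affine functions $f_n:=n+\tfrac1n(\,\cdot-c_{\max})$ converge pointwise to $f$ while $\inf\limits_C f_n=-\infty$.

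The delicate case is $\dom f=\{x_0\}$ with $f(x_0)=a$ finite, where pointwise convergence forces $f_n(x)\to\infty$ for every $x\ne x_0$. One's first instinct---that this ``pinches'' $f_n$ and pulls $\inf f_n$ up to $a$---is wrong: a thin downward spike whose vertex tends to $x_0$ with slopes tending to $\pm\infty$ can maintain a gap. Concretely I would take $f_n$ piecewise linear with vertex $(x_0-\tfrac1n,\,a-1)$ and slopes $-n$ and $+n$ (a one-sided spike if $x_0=c_{\max}$), so that $f_n(x_0)=a$, $f_n(x)\to\infty$ for $x\ne x_0$, and $\inf\limits_C f_n=a-1\not\to a$. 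The main obstacle is exactly this construction: making pointwise convergence coexist with a persistent infimum gap forces a careful joint tuning of vertex position, depth, and (diverging) slopes, together with separate bookkeeping for whether $x_0$ is interior to $C$ or equal to the right endpoint $c_{\max}$.
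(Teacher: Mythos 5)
Your argument for (II)$\implies$(I) contains one step that is false as stated: from ``$f$ convex and not nondecreasing'' you cannot in general ``pick $u<v$ \emph{in} $\dom f$ with $f(u)>f(v)$.'' Take $C=(-\infty,1)$, $f=\infty$ on $(-\infty,0)$, $f(x)=x$ on $[0,1)$: this $f$ is convex, satisfies (II)(i) (it is not nondecreasing, since $f(-1)=\infty>f(0)$), yet $f$ is strictly increasing on $\dom f=[0,1)$, so no such pair exists and your proof never gets started on this input. The failure mode is exactly ``$f\equiv\infty$ on a left portion, nondecreasing on $\dom f$.'' Fortunately the patch stays inside your scheme: after disposing of (ii) you may assume $\inf\limits_C f>-\infty$; non-nondecreasingness gives $u<v$ in $C$ with $f(u)>f(v)$ and $f(v)\in\R$, and if $f(u)=\infty$ then convexity forces $f=\infty$ on $(-\infty,u]$, so $\dom f\subseteq(u,c_{\max}]$ still contains two points, eventually $f_n(u)>f_n(v)$ because $f_n(u)\to\infty$, your left-tail inequality $f_n(x)\ge f_n(u)$ for $x\le u$ holds verbatim, and the decomposition gives $\inf\limits_C f_n=\min\bigl(f_n(u),\,\inf\limits_{C\cap[u,\infty)}f_n\bigr)\to\min\bigl(\infty,\inf\limits_C f\bigr)=\inf\limits_C f$. (Also, strictly, the right-hand piece should be $C\cap[u,\infty)$, which is $[u,c_{\max}]$ when $c_{\max}\in C$, not always $[u,c_{\max})$.)

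Beyond this gap, your route for (II)$\implies$(I) --- splitting $C$ at $u$, handling the unbounded tail by a direct monotonicity estimate, and invoking Corollary~\ref{cor:fin,fin} on the bounded piece --- genuinely differs from the paper's, which simply extends $f$ and each $f_n$ by $+\infty$ to all of $\R$ as in \eqref{eq:tf} and applies Theorem~\ref{th:}, after noting that $\tf$ is not nonincreasing (it equals $\infty$ beyond $c_{\max}$ while finite at two points) and, by hypothesis, not nondecreasing, hence not monotonic. Your stated worry that the extension is ``delicate'' is unfounded: one never needs to control arbitrary sequences converging to the (possibly non-l.s.c.) extension; Theorem~\ref{th:} applies to \emph{every} convex sequence on $\R$ converging pointwise to $\tf$, and the extended sequence $(\tf_n)$ is one such sequence, which is all the reduction uses. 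Your (I)$\implies$(II) direction coincides with the paper's (which reruns the constructions from the proof of Theorem~\ref{th:} with $C$ in place of $\R$): the tilt $f+\tfrac1n(\cdot-c_{\max})$ is the paper's $f(x)+x/n$, and your spike with vertex $(x_0-\tfrac1n,\,a-1)$ and slopes $\mp n$ is, after normalizing $x_0=0$, $a=1$, exactly the paper's $f_n(x)=|1+nx|$. So your closing hedging about ``careful joint tuning'' and the $x_0=c_{\max}$ bookkeeping is unnecessary: the construction you wrote down already works as is, since the vertex $x_0-\tfrac1n$ lies in $C$ automatically ($c_{\min}=-\infty$), and $f_n(x_0)=a$ with $f_n\to\infty$ pointwise off $x_0$ is immediate.
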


\begin{corollary}\label{cor:fin,inf}
Suppose that $-\infty<c_{\min}<c_{\max}=\infty$ and the function $f$ is convex. Then the following two conditions are equivalent to each other: 
\begin{enumerate}[(I)]
	\item $f$ is infimum-stable; 
	\item either (i) $\card\dom f>1$ and at that $f$ is not nonincreasing or (ii) $\inf\limits_C f=-\infty$. 	 	
\end{enumerate} 
\end{corollary}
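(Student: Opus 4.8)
The plan is to deduce Corollary~\ref{cor:fin,inf} from Theorem~\ref{th:} by extending $f$ from the half-line $C$ (here $c_{\min}$ is finite and $c_{\max}=\infty$) to all of $\R$. Define $\tf\colon\R\to[-\infty,\infty]$ by $\tf:=f$ on $C$ and $\tf:=\infty$ on $\R\setminus C$. Since $C$ is an interval, $\epi\tf=\epi f$ is convex, so $\tf$ is a convex function on $\R$ with $\dom\tf=\dom f$ and $\inf_\R\tf=\inf_C f$. First I would record the monotonicity dictionary: because $\tf\equiv\infty$ on $(-\infty,c_{\min})$ while $\tf<\infty$ somewhere on $C$ whenever $\dom f\ne\emptyset$, the extension $\tf$ can be nondecreasing on $\R$ only in the degenerate case $\dom f=\emptyset$; hence, as soon as $\card\dom f\ge1$, the statement ``$\tf$ is monotonic'' is equivalent to ``$\tf$ is nonincreasing'', which in turn is equivalent to ``$f$ is nonincreasing''. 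Feeding these identities into condition (II) of Theorem~\ref{th:} applied to $\tf$, I expect it to read off verbatim as condition (II) of the corollary; thus \emph{(II) of the corollary holds if and only if $\tf$ is infimum-stable on $\R$.}

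Next I would prove (II)$\Rightarrow$(I). Assume $\tf$ is infimum-stable on $\R$. Given any sequence $(f_n)$ of convex functions on $C$ with $f_n\to f$ pointwise, I would extend each $f_n$ to $\R$ by $+\infty$ off $C$, denoting the result $\tf_n$; each $\tf_n$ is convex, and $\tf_n\to\tf$ pointwise on $\R$ (on $C$ this is the hypothesis, off $C$ both sides equal $\infty$). Infimum-stability of $\tf$ then yields $\inf_C f_n=\inf_\R\tf_n\to\inf_\R\tf=\inf_C f$. As $(f_n)$ was arbitrary, $f$ is infimum-stable, i.e.\ (I) holds.

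The substance of the argument is the converse (I)$\Rightarrow$(II), which I would prove in contrapositive form: assuming $f$ fails (II), I would exhibit convex $f_n\to f$ with $\inf_C f_n\not\to\inf_C f$. Failure of (II) means $\inf_C f>-\infty$ together with either $\card\dom f\le1$ or $f$ nonincreasing. If $f$ is nonincreasing with $m:=\inf_C f$ finite, then (being nonincreasing and not identically $\infty$) $f$ is finite and bounded on a neighbourhood of $+\infty$, and the downward tilt $f_n:=f-(\,\cdot\,-c_{\min})/n$ is convex, converges to $f$ pointwise (the tilt tends to $0$ at each fixed point), and satisfies $f_n(x)\to-\infty$ as $x\to\infty$, so $\inf_C f_n=-\infty\ne m$. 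If $\dom f=\emptyset$ (so $f\equiv\infty$ and $\inf_C f=\infty$), the convex functions $f_n:=n\,e^{-(\,\cdot\,-c_{\min})}$ tend to $\infty$ pointwise yet have $\inf_C f_n=0$. Finally, if $\dom f=\{x_0\}$ with $f(x_0)=v$ finite, I would use the ``dipping parabolas'' $f_n:=v+b_n(\,\cdot\,-x_0)^2-2\sqrt{b_n}\,(\,\cdot\,-x_0)$ with $b_n\to\infty$: these are convex, satisfy $f_n(x_0)=v$ and $f_n(x)\to\infty$ for every fixed $x\ne x_0$, yet attain the value $v-1$ at the point $x_0+b_n^{-1/2}\in C$, so $\inf_C f_n=v-1\not\to v$. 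These three families exhaust the failure of (II).

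The one genuinely delicate point---the expected main obstacle---is the single-point-domain case: convexity forbids a function that is large near $x_0$ and yet dips low far away, so a naive construction cannot simultaneously blow up pointwise off $x_0$ and keep the infimum down. The parabola family resolves this by letting the location $x_0+b_n^{-1/2}$ of the fixed-depth dip slide into $x_0$, which is invisible to pointwise convergence at any fixed $x\ne x_0$ while holding $\inf_C f_n$ a constant distance below $v$; that this minimiser lies in $C$ uses precisely $c_{\max}=\infty$ (as does the vanishing of $\inf_C f_n$ in the empty-domain case). The only remaining care is routine bookkeeping over whether $c_{\min}\in C$ and over the monotonicity dictionary of the first paragraph, neither of which affects the constructions above.
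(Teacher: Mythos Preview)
Your proof is correct and follows essentially the same strategy as the paper: for (II)$\Rightarrow$(I) you extend $f$ and $f_n$ by $+\infty$ off $C$ and invoke Theorem~\ref{th:}, and for $\neg$(II)$\Rightarrow\neg$(I) you exhibit explicit convex counterexamples in each of the three sub-cases. The paper's own proof is even more economical: it simply observes that Corollary~\ref{cor:fin,inf} follows from Corollary~\ref{cor:-inf,fin} by the horizontal flip $x\mapsto -x$ (which exchanges ``nonincreasing'' with ``nondecreasing'' and $c_{\min}$ with $-c_{\max}$), so no new constructions are needed. Your counterexample functions (the exponential for $\dom f=\emptyset$ and the sliding parabolas for $\card\dom f=1$) differ from the flipped versions of the paper's affine and V-shaped families $x\mapsto -x+n$ and $x\mapsto|1-nx|$, but they are equally valid and your commentary on why the single-point case is the delicate one is apt.
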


When, in addition to the conditions in Corollary~\ref{cor:fin,fin} that the set $C$ be bounded and the function $f$ on $C$ be convex, it is known that $f>-\infty$ on $C$, then the nececcary and sufficient condition for $f$ to be infimum-stable can be simplified: 

\begin{cor2a}\label{cor:2a}
Suppose that $-\infty<c_{\min}<c_{\max}<\infty$, the function $f$ is convex, and $f>-\infty$ on $C$. Then $f$ is infimum-stable if and only if $\card\dom f>1$.  
\end{cor2a}

The above results are significantly easier to prove, if one additionally assumes that the functions $f$ and $f_n$ take only real values. Indeed, 
if $f$ is not monotonic on $C$, then $f(v)<f(u)\wedge f(w)$ for some $u$, $v$, and $w$ in $C$ such that $u<v<w$ and hence the convexity of $f$ yields $\inf\limits_C f=\inf\limits_{C\cap[u,w]} f$. So, for all large enough $n$ one has  $f_n(v)<f_n(u)\wedge f_n(w)$ and hence $\inf\limits_C f_n=\inf\limits_{C\cap[u,w]} f_n$. By \cite[Theorem~10.6]{rocka}, the real-valued $f_n$'s are uniformly bounded and equi-continuous on $C\cap[u,w]$. So, by the Arzel\`a--Ascoli theorem, $f_n\to f$ uniformly on $C\cap[u,w]$. 
Thus, $\inf\limits_C f_n=\inf\limits_{C\cap[u,w]} f_n\longrightarrow\inf\limits_{C\cap[u,w]} f=\inf\limits_C f$. 

Alternatively, if $C=\R$ and again $f$ and $f_n$ take only real values, 
then the convexity condition implies that these functions are continuous and 
attain a minimum value, which yields another kind of short proof \cite{wong}. 

However, it is oftentimes useful to allow $f$ and $f_n$ to take the value $\infty$. In particular, one may restate any constrained minimization problem, of minimizing a convex function $f$ over a convex set $C\subset\R$, as an unconstrained one, of minimizing the corresponding convex extension $\tf$ of $f$ over the entire set $\R$, where 
\begin{equation}\label{eq:tf}
\tf:=
\left\{
\begin{alignedat}{2}
&f&&\text{ on } C, \\ 
&\infty&&\text{ on } \R\setminus C. 
\end{alignedat}
\right.
\end{equation}
In fact, such extensions will be used in Section~\ref{proofs}, in the proofs of Corollaries~\ref{cor:fin,fin}, \ref{cor:-inf,fin}, and \ref{cor:fin,inf}. 

Also, the value $\infty$ of convex functions arises naturally, for instance, in applications in probability, where the moment generating function of a random variable $X$ (defined by the formula $M_X(t):=\E e^{tX}$ for $t\in\R$) may assume the value $\infty$; clearly, this function and even its logarithm are convex. In fact, the present note was motivated by certain applications to measures of risk/inequality in finance/economics. 

As for allowing $f$ and $f_n$ to also take the value $-\infty$, this provides a further generalization, without significantly complicating the proofs. 

Another application of Theorem~\ref{th:} is to the Legendre--Fenchel transform $f^*\colon\R\to[-\infty,\infty]$ of $f$, defined by the formula 
\begin{equation*}
	f^*(t):=\sup_{x\in\R}[tx-f(x)]
\end{equation*}
for $t\in\R$. This application is quite straightforward: 

\begin{corollary}\label{cor:f^*}
Suppose that $C=\R$ and the function $f$ is convex and not monotonic, with $\card\dom f>1$. Then the function $f^*$ is continuous at $0$. 
\end{corollary}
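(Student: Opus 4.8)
The plan is to read the Legendre--Fenchel transform as an infimum of a family of convex functions and then invoke Theorem~\ref{th:}. For each $t\in\R$ I would introduce the function $g_t\colon\R\to[-\infty,\infty]$ given by $g_t(x):=f(x)-tx$. Its epigraph is the image of $\epi f$ under the shear $(x,\tau)\mapsto(x,\tau-tx)$, a linear bijection of $\R^2$, so $g_t$ is convex in the general sense used here. By definition $f^*(t)=\sup_{x\in\R}[tx-f(x)]=-\inf_\R g_t$, and in particular $g_0=f$ and $f^*(0)=-\inf_\R f$. Thus continuity of $f^*$ at $0$ is exactly the assertion that $\inf_\R g_t\to\inf_\R g_0$ as $t\to0$.

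Next I would observe that the hypotheses — $C=\R$, $f$ convex and not monotonic, $\card\dom f>1$ — are precisely condition (i) in part (II) of Theorem~\ref{th:}, so $f$ is infimum-stable. Now fix an arbitrary sequence $t_n\to0$. For each fixed $x\in\R$ one has $g_{t_n}(x)=f(x)-t_nx\to f(x)=g_0(x)$, this holding also when $f(x)=\pm\infty$ (since then $g_{t_n}(x)=\pm\infty$ for every $n$); hence the convex functions $g_{t_n}$ converge to $g_0=f$ pointwise on $\R$. Infimum-stability then gives $\inf_\R g_{t_n}\to\inf_\R f$, that is, $f^*(t_n)\to f^*(0)$. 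Since the sequence $t_n\to0$ was arbitrary and the domain $\R$ is a metric space, this sequential continuity at $0$ is the desired continuity of $f^*$ at $0$.

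There is no serious obstacle here — the whole point is the reduction above — but two points deserve care, and they are where the hypotheses do their work. First, the argument must cover the degenerate case $\inf_\R f=-\infty$ uniformly; in that case $f(x_0)=-\infty$ for some $x_0$ forces $tx_0-f(x_0)=\infty$ for every $t$, so $f^*\equiv\infty$ and continuity is trivial, consistent with the fact that $f$ is then infimum-stable via condition (ii) as well. Second, the non-monotonicity is genuinely essential: for a monotonic convex $f$ such as $f(x)=e^x$ one has $f^*(0)=0$ yet $f^*(t)=\infty$ for all $t<0$, so $f^*$ is discontinuous at $0$ — and this is exactly the pathology that infimum-stability excludes. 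Beyond these remarks the remaining work is routine: checking convexity of $g_t$ under the extended-real-valued definition via the epigraph shear, and checking the pointwise convergence $g_{t_n}\to f$ at every point including where $f$ is infinite.
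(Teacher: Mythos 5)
Your proposal is correct and takes essentially the paper's intended route: the paper gives no separate proof of Corollary~\ref{cor:f^*}, presenting it as a ``quite straightforward'' application of Theorem~\ref{th:}, and your reduction $f^*(t)=-\inf_{x\in\R}[f(x)-tx]$, with the convex functions $x\mapsto f(x)-t_nx$ converging pointwise (including at points where $f=\pm\infty$) to $f$ along any $t_n\to0$, is exactly that application, the hypotheses being precisely alternative (i) of condition (II). Your supporting checks --- convexity of $g_t$ via the epigraph shear, sequential continuity sufficing since $\R$ is metrizable, and the degenerate case $f^*\equiv\infty$ --- are all sound.
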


The need for results given above appears to arise quite naturally. However, I have been able to find only similar, but not quite the same, results in the existing literature. 


One series of papers (see e.g.\ \cite{bonnans-shapiro00}) concerns a perturbed function $f\colon X\times U\to\R$, where $U$ is the set of values of a perturbation parameter. Among other conditions, $f(x,u)$ is assumed there to be at least continuous, jointly in $x\in X$ and $u\in U$, with the conclusion that the value function $u\mapsto\inf\limits_{x\in X}f(x,u)$ is continuous. 

One should also mention \cite[Theorem~1.2]{komuro} concerning, essentially, the case when $f$ and $f_n$ are \emph{real-valued} convex functions defined on a \emph{compact} convex subset of $\R^d$. 
%

Another series of papers deals  
with other kinds of convergence of $f_n$ to $f$, logically more complicated than the pointwise convergence --- mostly in the more general setting, when the functions are defined on a reflexive Banach space $E$. For instance, the $\tau$-convergence $f_n\overset\tau\to f$ on $E$ can be stated for $E=\R$ as follows (see e.g.\ \cite[Lemma~1]{salin-wets77}): 
for any $x\in \R$ and any sequence $(x_n)$ in $\R$ such that $x_n\to x$ one has $\liminf_n f_n(x_n)\ge f(x)$, and every $x\in \R$ is the limit of a sequence $(x_n)$ in $\R$ such that 
$\limsup_n f_n(x_n)\le f(x)$ \big(and hence $\lim_n f_n(x_n)=f(x)$ for such a sequence $(x_n)$\big). 
Also, in contrast with our results, the convex functions $f$ and $f_n$ are assumed in \cite{salin-wets77} to be lower-semicontinuous (l.s.c.) and proper. 
As shown in \cite{salin-wets77}, the $\tau$-convergence of $f_n$ to $f$ is equivalent to the pointwise convergence --- provided the additional condition that the $f_n$'s are \emph{equi}-lower-semicontinuous; however, without the latter condition, the relation between the $\tau$-convergence  and the pointwise convergence appears unclear. 
A potentially very interesting result is \cite[Corollary~2C]{salin-wets77}, stating the following: if $f_n$ and $f$ are l.s.c.\ proper convex functions on $\R^d$, $f_n\to f$ pointwise, and the interior of $\dom f$ is nonempty, then $f_n\overset\tau\to f$. 
Using then \cite[Theorem~3.7]{beer-lucc}, one could deduce the implication (II)$\implies$(I) of our Theorem~\ref{th:} --- in the case when the functions $f$ and $f_n$ are l.s.c.\ and proper. However, there appears to be a gap in the proof of the mentioned \cite[Corollary~2C]{salin-wets77}. Namely, part (iii) of \cite[Lemma~2]{salin-wets77} is incorrect in general, without additional assumptions. Indeed, consider 

\begin{ex}\label{ex:}
Let functions $f$ and $f_n$ on $\R^2$ be defined by the conditions that $f(0,0)=0$, $f(x,y)=\infty$ for $(x,y)\in\R^2\setminus\{(0,0)\}$, and $f_n(x,y)=n|y+nx|$ for all natural $n$ and all $(x,y)\in\R^2$. Then the functions $f$ and 
$f_n$ are convex, l.s.c., and proper, and 
$f_n\to f$ pointwise. 
Also, the set $K:=[-1,0]\times\{1\}$ is a compact subset of the set $\R^2\setminus\{0\}$, which latter coincides with the complement to $\R^2$ of the closure of the set $\dom f=\{0\}$. 
However, $\min\limits_K f_n=0$ for all natural $n$, and hence $f_n$ does not go to $\infty$ uniformly on $K$. 
\end{ex}

Such an example would be impossible for functions $f$ and $f_n$ defined on $\R$, as can be seen from the proof of Theorem~\ref{th:} below. 
So, Example~\ref{ex:} suggests that one would have to overcome some additional difficulties to extend our results to convex functions defined on a linear space of dimension greater than $1$.
 
\begin{center}***\end{center}

Instead of sequences $(f_n)$, one can, more generally, deal here with (Moore--Smith) nets $(f_\nu)_{\nu\in\Nu}$ of convex functions $f_\nu$, where $\Nu$ is an arbitrary directed set; see e.g.\ \cite[Chapter~2]{kelley55} concerning the relevant terms of general topology. 
Equivalently, one can define an ostensibly stronger notion of infimum-stability in terms of a topology, say $\pi_{\CC(C)}$, on the set $\CC(C)$ of all convex functions $f\colon C\to[-\infty,\infty]$ on a convex set $C\subseteq\R$. 
Namely, $\pi_{\CC(C)}$ should be the topology  induced on $\CC(C)$ by the Tychonoff product topology $\pi_C$ on the set $[-\infty,\infty]^C$ of all functions $f\colon C\to[-\infty,\infty]$ on $C$, so that a subbase of the topology $\pi_{\CC(C)}$ consists of all sets of the form $F_{x,I}:=\{f\in\CC(C)\colon f(x)\in I\}$, where $x$ is any point in $C$ and $I$ is any interval of one of the following three forms: $(a,b)$, $[-\infty,b)$, or $(a,\infty]$, for arbitrary real $a$ and $b$. 


\begin{definition}\label{def:top-inf-stable}
A
function $f\in\CC(C)$ is \emph{topologically infimum-stable} if the mapping 
$\CC(C)\ni g\mapsto\inf g$ is continuous at ``point'' $f$ with respect to the topology $\pi_{\CC(C)}$. 
\end{definition}

Closely following the lines of the proofs of 
Theorem~\ref{th:} and Corollaries~\ref{cor:fin,fin}, \ref{cor:-inf,fin}, \ref{cor:fin,inf},  and 2a, stated above, and substituting terms such as ``net'' and ``subnet'' for ``sequence'' and ``subsequence'', one sees that all these statements hold with nets $(f_\nu)_{\nu\in\Nu}$ in place of sequences $(f_n)$; at that, the term ``eventually'' should be understood in a standard way, as e.g.\ in \cite[page~65]{kelley55}: a property $P_\nu$ holds eventually if there is some $\mu\in\Nu$ such that $P_\nu$ holds for each $\nu\in\Nu$ satisfying the condition $\nu\ge\mu$. 
%
Thus, one obtains 

\begin{theorem}\label{th:equiv}  
For any convex set $C\subseteq\R$, a function $f\in\CC(C)$ is topologically infimum-stable if and only if it is sequentially infimum-stable. 
\end{theorem}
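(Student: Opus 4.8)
The plan is to recognize topological infimum-stability as the exact ``net version'' of sequential infimum-stability, and then to identify both notions with one common intrinsic condition on $f$. First I would unwind the topology: since $\pi_{\CC(C)}$ is the trace on $\CC(C)$ of the Tychonoff product topology on $[-\infty,\infty]^C$, the standard description of convergence in a product space (see e.g.\ \cite[Chapter~2]{kelley55}) shows that a net $(f_\nu)_{\nu\in\Nu}$ converges to $f$ in $\pi_{\CC(C)}$ if and only if $f_\nu(x)\to f(x)$ in $[-\infty,\infty]$ for every $x\in C$, that is, if and only if $f_\nu\to f$ pointwise. Combining this with the net criterion for continuity at a point --- a map is continuous at $f$ exactly when it sends every net converging to $f$ to a net converging to the image of $f$ --- I obtain that $f$ is topologically infimum-stable if and only if $\inf_C f_\nu\to\inf_C f$ for every net $(f_\nu)$ of convex functions converging to $f$ pointwise. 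Call this property \emph{net infimum-stability}.

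With this reformulation one implication is immediate: every sequence is a net (indexed by the directed set $\N$), so net infimum-stability implies sequential infimum-stability. For the converse, which is the substantive direction, I would invoke that Theorem~\ref{th:} and Corollaries~\ref{cor:fin,fin}, \ref{cor:-inf,fin}, and \ref{cor:fin,inf} together characterize sequential infimum-stability --- in each of the four cases determined by the finiteness of the endpoints $c_{\min}$ and $c_{\max}$, the degenerate case $\card C\le1$ being trivial --- by an \emph{intrinsic} condition on $f$ alone, namely the corresponding disjunction~(II). The key observation is that rereading the proofs of these statements with ``net'' and ``subnet'' substituted for ``sequence'' and ``subsequence'' yields their net analogues verbatim, so that the very same intrinsic condition characterizes net infimum-stability. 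Granting these analogues, net and sequential infimum-stability are each equivalent to one and the same condition on $f$, hence equivalent to each other, and the theorem follows.

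The step I expect to be the main obstacle is precisely the verification that the \emph{sufficiency} halves of those proofs survive the passage from sequences to nets. In the sequential arguments one typically extracts, using the (sequential) compactness of $[-\infty,\infty]$, a subsequence along which some auxiliary quantity converges, and then derives a contradiction. In the net setting the corresponding device is that every net in the compact space $[-\infty,\infty]$ admits a convergent subnet, so one replaces each subsequence extraction by a subnet extraction, after which the contradiction arguments carry over unchanged. Since no use is made of countability or metrizability of the index set beyond this extraction step, the translation is routine, and this completes the proof.
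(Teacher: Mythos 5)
Your proposal is correct and follows essentially the same route as the paper: the paper likewise identifies topological infimum-stability with the net version of Definition~\ref{def:inf-stable} (via pointwise convergence of nets in the Tychonoff topology and the net criterion for continuity), and then obtains the equivalence by rereading the proofs of Theorem~\ref{th:} and Corollaries~\ref{cor:fin,fin}, \ref{cor:-inf,fin}, \ref{cor:fin,inf} with ``net'' and ``subnet'' substituted for ``sequence'' and ``subsequence''. Your explicit remark that the only nontrivial substitution is replacing subsequence extraction in the compact space $[-\infty,\infty]$ by subnet extraction is exactly the point the paper's brief argument relies on.
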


\section{Proofs}\label{proofs} 

\begin{proof}[Proof of Theorem~\ref{th:}]\ Let us first consider the implication 

(II)$\implies$(I).\quad Assume that the function $f\colon\R\to[-\infty,\infty]$ is convex and satisfies the condition (II), and take any sequence $(f_n)$ of convex functions $f_n\colon\R\to[-\infty,\infty]$ converging to $f$ pointwise. 
Let 
\begin{equation}\label{eq:m_n,m}
	m:=\inf\limits_\R f\quad\text{and}\quad m_n:=\inf\limits_\R f_n. 
\end{equation}	
We need to show that $m_n\to m$. If this is not true, then there is a sequence $(n_k)$ of natural numbers such that $n_k\to\infty$ and $m_{n_k}$ converges to a point in $[-\infty,\infty]\setminus\{m\}$ as $k\to\infty$. So, without loss of generality (w.l.o.g.) there is a limit 
\begin{equation}\label{eq:m_infty}
	m_\infty:=\lim_n m_n. 
\end{equation}

Take any $c\in(m,\infty)$. Then, by the definition of $m$, 
there is some $x_c\in\R$ such that $f(x_c)<c$. By the pointwise convergence of $f_n$ to $f$, it follows that eventually (that is, for all large enough $n$) $f_n(x_c)<c$ and hence $m_\infty\le\limsup_n f_n(x_c)\le c$. 
Thus (in fact, without any conditions on $f$), $m_\infty\le m$. 
So, w.l.o.g.\ 
\begin{equation}\label{eq:limsup<m}
	m_\infty
	<m. 
\end{equation}
In particular, it follows that $m>-\infty$. 
On the other hand, the condition $\card\dom f>1$ shows that $m<\infty$. Thus, 
\begin{equation}\label{eq:m in R}
	m\in\R.   
\end{equation}
So, by vertical shifting, w.l.o.g.\ one may assume that 
\begin{equation}\label{eq:m=1}
	m=1;     
\end{equation}
we shall actually use this latter assumption only where convenient.  

Thus, the alternative (i) of the condition (II) takes place: $\card\dom f>1$ and $f$ is not monotonic. 
%
So, there are some real numbers $x_1$ and $x_2$ such that $x_1<x_2$ and $f(x_1)<f(x_2)$, which in particular implies $f(x_1)<\infty$. 
Hence, by \eqref{eq:conv}, for any $x\in(x_2,\infty)$ with $f(x)<\infty$ one has  $f(x_2)\le\frac{x-x_2}{x-x_1}\,f(x_1)+\frac{x_2-x_1}{x-x_1}\,f(x)$ or, equivalently, 
\begin{equation}\label{eq:f(x)ge}
f(x)\ge
f(x_2)+\frac{x-x_2}{x_2-x_1}\big(f(x_2)-f(x_1)\big),  	
\end{equation}
which shows that $f(x)\to\infty$ as $x\to\infty$. 
Similarly, $f(x)\to\infty$ as $x\to-\infty$. 
So, 
\begin{equation}\label{eq:to infty}
	f(x)\underset{|x|\to\infty}\longrightarrow\infty. 
\end{equation}
Therefore and in view of \eqref{eq:m in R}, introducing now 
\begin{equation*}
	L:=\{x\in\R\colon f(x)\le m+1\},\quad u:=\inf L, \quad\text{and}\quad v:=\sup L, 
\end{equation*}
one sees that $v<\infty$ and $u>-\infty$. 
Also, by the convexity of $f$ and the condition~
$\card\dom f>1$, $L$ is a nonempty interval with the endpoints $u$ and $v$, and 
\begin{equation*}
	m=\inf\limits_{[u,v]}f. 
\end{equation*}

Moreover, $u<v$. Indeed, if $u=v$, then the nonempty interval $L$ must be the singleton set $\{u\}$. So, $f>m+1$ on $\R\setminus\{u\}$ and hence $f(u)=m$. Therefore (cf.\ \eqref{eq:f(x)ge}), for each $x\in(u,\infty)$ and all $\vp\in(0,x-u)$ one has 
$f(x)\ge f(u)+\frac{x-u}\vp\big(f(u+\vp)-f(u)\big)\ge f(u)+\frac{x-u}\vp\underset{\vp\downarrow0}\longrightarrow\infty$, so that $f=\infty$ on $(u,\infty)$; similarly, $f=\infty$ on $(-\infty,u)$, which contradicts the condition 
$\card\dom f>1$. 
Thus, 
\begin{equation*}
	-\infty<u<v<\infty\quad\text{and}\quad f(x)>m+1\text{ for all real } x\notin[u,v]. 
\end{equation*}
So, eventually $f_n(u-1)\wedge f_n(v+1)>m+1$, whereas $f(w)<m+1$ for some $w\in L\subseteq[u,v]$ and hence eventually $f_n(w)<m+1$. 
Now the convexity of $f_n$ implies (cf.\ \eqref{eq:f(x)ge}) that uniformly over all $x\in[v+1,\infty)$ with $f_n(x)<\infty$ eventually one has 
$f_n(x)\ge f_n(v+1)+\frac{x-v-1}{v+1-w}\big(f_n(v+1)-f_n(w)\big)\ge f_n(v+1)>m+1$, and so, 
$\inf\limits_{[v+1,\infty)}f_n\ge m+1$. Similarly, eventually $\inf\limits_{(\infty,u-1]}f_n\ge m+1$, whence  
\begin{equation}\label{eq:u-1,v+1}
	m_n=\inf\limits_{[u-1,v+1]}f_n. 
\end{equation} 

Introduce next   
\begin{equation}\label{eq:xmin,xmax}
	x_{\min}:=\inf\dom f\quad\text{and}\quad x_{\max}:=\sup\dom f;   
\end{equation}
by the condition $\card\dom f>1$, $\dom f$ is a nonempty interval with endpoints $x_{\min}$ and $x_{\max}$, so that 
\begin{equation}\label{eq:xmin<xmax}
	-\infty\le x_{\min}<x_{\max}\le\infty. 
\end{equation}
Thus, in view of \eqref{eq:m in R}, the convex function $f$ is real-valued on the interval $(x_{\min},x_{\max})$ and therefore has the left and right derivatives, $f'_-$ and $f'_+$, on $(x_{\min},x_{\max})$. 
Now one can also introduce  
\begin{equation*}
	x_+:=x_{\max}\wedge\inf E_+\quad\text{and}\quad x_-:=x_{\min}\vee\sup E_-, 
\end{equation*}
where 
\begin{equation*}
	E_+:=\{x\in(x_{\min},x_{\max})\colon f'_+(x)>0\} \quad\text{and}\quad 
	E_-:=\{x\in(x_{\min},x_{\max})\colon f'_-(x)<0\}. 
\end{equation*}
Note that for any $x\in E_-$ and $y\in E_+$ one has $x\le y$ -- because for any $x$ and $y$ such that $x_{\min}<y<x<x_{\max}$ one has $f'_+(y)\le f'_-(x)$, by the convexity of $f$.  
So, 
\begin{equation}\label{eq:xmin<x-<x+<xmax}
	x_{\min}\le x_-\le x_+\le x_{\max}.   
\end{equation}
Comparing this with the strict inequality in \eqref{eq:xmin<xmax}, one sees that $x_+>x_{\min}$ or $x_-<x_{\max}$. These latter two cases are quite similar to each other, and in fact they can be deduced from each other by the horizontal flip $\R\times[-\infty,\infty]\ni(x,\tau)\mapsto(-x,\tau)$ applied to the graphs of the functions $f_n$ and $f$. 


So, w.l.o.g.\ 
\begin{equation}\label{eq:xmin<x_+}
	x_{\min}<x_+\le x_{\max}. 
\end{equation}

The right derivative, $f'_+$, of the convex function $f$ is nondecreasing on $(x_{\min},x_{\max})$. 
Therefore, the set $E_+$ is a (possibly empty) interval with endpoints $x_+$ and $x_{\max}$. 
So, $f'_+\le0$ and hence $f$ is nonincreasing on the interval $(x_{\min},x_+)$, and the latter interval is nonempty, in view of \eqref{eq:xmin<x_+}.  
Thus, there exists the left limit of the function $f$ at the point $x_+$:
\begin{equation}\label{eq:p}
	p:=f\big((x_+)-\!\big)\in[m,\infty).  
\end{equation}

Observe that 
\begin{equation}\label{eq:f>p}
	f>p\text{ in a right neighborhood of }x_+. 
\end{equation}
Indeed, if $x_+=x_{\max}$, then $f=\infty>p$ on the entire interval $(x_+,\infty)=(x_{\max},\infty)$. 
So, in view of \eqref{eq:xmin<x_+}, one may now assume that $x_+<x_{\max}$, so that $x_+$ is in the interval $(x_{\min},x_{\max})$, which latter is the interior of the set $\dom f$. 
Therefore and in view of \eqref{eq:m in R}, the convex function $f$ is continuous at $x_+$, with $f(x_+)=f\big((x_+)-\!\big)=p$. 
Because $f$ is strictly increasing on the interval $(x_+,x_{\max})$ and continuous at $x_+$, $f$ is strictly increasing on the interval $[x_+,x_{\max})$ as well, which implies that $f>f(x_+)=p$ on $(x_+,x_{\max})$. 
Thus, \eqref{eq:f>p} is verified. 

Take now any $\de\in(0,\infty)$. Then take any 
\begin{equation}\label{eq:x++}
	x_{++}\in(x_+,x_++\de) 
\end{equation}
such that 
\begin{equation}\label{eq:f(x++)>p}
	f(x_{++})>p; 
\end{equation}
such a point $x_{++}$ exists by \eqref{eq:f>p}.  
Next, take any 
\begin{equation}\label{eq:vp}
	\vp\in\big(0,\de\wedge1\wedge(x_+-x_{\min})\wedge\sqrt{f(x_{++})-p}\,\big), 
\end{equation}
which is possible in view of \eqref{eq:xmin<x_+} and \eqref{eq:f(x++)>p}. 
Then on, successively take  
\begin{align}
	y&\in\big(\max\big[\tfrac12(x_++x_{\min}),x_+-\vp\big],\;x_+-\tfrac\vp2\big), \label{eq:y}\\ 
	z&\in(y+\tfrac\vp2,\,x_+)\ \text{such that } f(z)<p+\vp^2, \label{eq:z} \\ 
	w&\in\big(x_{\min},\,y-\tfrac12(x_+-x_{\min})\big); \label{eq:w}    
\end{align} 
here, (i) the choice of $y$ is possible by the choice of $\vp$; (ii) the choice of $w$ is possible by the choice of $y$; and (iii) the choice of $z$ is possible by the choice of $y$, the condition $\vp\ne0$, and the definition of $p$ in \eqref{eq:p}. 
The conditions on $\vp,y,z,w$ listed in \eqref{eq:vp}, \eqref{eq:y}, \eqref{eq:z}, \eqref{eq:w} imply 
\begin{equation}\label{eq:w,y,z}
	\begin{gathered}
	x_{\min}<w<y<z<x_+, \quad 
	y>x_+-\vp, \quad 
	z-y>\vp/2, 
	\quad y-w>\tfrac12\,(x_+-x_{\min}), \\ 
	\quad\text{and}\quad  
	f(x_{++})>p+\vp^2. 
\end{gathered}
\end{equation}


Since $f$ is nonincreasing on $(x_{\min},x_+)$, one has $f(y)\ge p$. This and the inequalities $f(z)<p+\vp^2$ in \eqref{eq:z} and $f(x_{++})>p+\vp^2$ in \eqref{eq:w,y,z} show that eventually 
\begin{equation}\label{eq:f_n >,< }
	f_n(y)\ge p-\vp^2\quad\text{and}\quad f_n(z)<p+\vp^2<f_n(x_{++}). 
\end{equation}
Also, the values $f(y)$ and $f(z)$ are real, in view of \eqref{eq:m in R} and because $\{y,z\}\subset\dom f$. 
So, eventually the values $f_n(y)$ and $f_n(z)$ are real. 
Now, by the convexity of $f_n$, 
\eqref{eq:f_n >,< }, \eqref{eq:w,y,z}, and \eqref{eq:p}, 
uniformly over all $x\in[u-1,y]$ one eventually has 
\begin{multline}\label{eq:f_n,x<y}
f_n(x)\ge
\frac{z-x}{z-y}\,f_n(y)-\frac{y-x}{z-y}\,f_n(z)
\ge p-\frac{z+y-2x}{z-y}\,\vp^2
\ge p-2(z+y-2x)\vp \\ 
\ge p-2\big(2x_+-2(u-1)\big)\vp\ge m-4\big(x_+-(u-1)\big)\vp, 
\end{multline}
so that eventually 
\begin{equation}\label{eq:<y}
	\inf\limits_{[u-1,y]}f_n\ge m-c_1\vp\ge m-c_1\de, 
\end{equation}
where $c_1:=\max\big[0,4\big(x_+-(u-1)\big)\big]$; the last inequality in \eqref{eq:<y} holds because of \eqref{eq:vp}. 

Somewhat similarly, 
uniformly over all $x\in[y,x_{++}]$ one eventually has 
\begin{equation}\label{eq:f_n,xmax>x>y}
f_n(x)\ge
\frac{x-w}{y-w}\,f_n(y)-\frac{x-y}{y-w}\,f_n(w)
\ge p-\de-\frac{4\big(f(w)+1\big)}{x_+-x_{\min}}\,\de;  
\end{equation}
here we used the relations $\frac{x-w}{y-w}\ge1$, $p\ge m=1>\vp>0$, $f_n(y)\ge p-\vp^2\ge p-\vp=0\vee(p-\vp)$, $\vp<\de$, $0\le x-y\le\vp+\de<2\de$, $y-w>\tfrac12\,(x_+-x_{\min})$, and $0\le f(w)-m=f(w)-1<f_n(w)<f(w)+1$, which hold at least eventually. 
So, eventually 
\begin{equation}\label{eq:>y,<xmax}
	\inf\limits_{[y,x_{++}]}f_n\ge m-c_2\de, 
\end{equation}
where $c_2:=1+4\big(f(w)+1\big)/(x_+-x_{\min})$. 

Further, 
uniformly over all $x\in(x_{++},\infty)$ with $f_n(x)<\infty$ one eventually has 
\begin{equation}\label{eq:f_n,x>xmax}
f_n(x)\ge
f_n(x_{++})+\frac{x-x_{++}}{x_{++}-z}\big(f_n(x_{++})-f_n(z)\big)
\ge f_n(x_{++})>p+\vp^2\ge m;    
\end{equation}
the second inequality here holds by \eqref{eq:f_n >,< }. 
So, eventually 
\begin{equation}\label{eq:>xmax}
	\inf\limits_{(x_{++},\infty)}f_n\ge m.  
\end{equation}
Combining this with \eqref{eq:u-1,v+1}, \eqref{eq:<y}, and \eqref{eq:>y,<xmax}, one sees that, for any real $\de>0$, eventually 
\begin{equation}\label{eq:>m-c vp}
	m_n\ge m-c\de,     
\end{equation}
where $c:=c_1\vee c_2$. 
%
Thus, 
one obtains a contradiction with \eqref{eq:m_infty}--\eqref{eq:limsup<m}, so that 
the implication (II)$\implies$(I) is verified. 

It remains to consider the reverse implication,  

(I)$\implies$(II).\quad Equivalently, let us verify the implication $\neg$(II)$\implies\neg$(I), where the symbol $\neg$ denotes the negation, as usual. 
Thus, let us assume that the condition (II) in Theorem~\ref{th:} fails to hold. We have then to show that $f$ is not infimum-stable. 

If $\card\dom f=0$, then $f=\infty$ on $\R$ and hence $m=\infty$. On the other hand, defining convex functions $f_n$ by the formula $f_n(x):=x+n$ for all natural $n$ and all $x\in\R$, one sees that $f_n\to f$ pointwise, whereas $m_n=-\infty\not\to\infty=m$. So, $f$ is not infimum-stable in the case $\card\dom f=0$. 

Next, if $\card\dom f=1$ and $m>-\infty$, then $\dom f$ is the singleton set $\{x_0\}$ for some $x_0\in\R$, and $m=f(x_0)\in\R$. So, in view of possible vertical and horizontal shifting, let us make w.l.o.g.\ the simplifying assumptions that $x_0=0$ and $f(x_0)=1$. 
Then $m=f(0)=1$ and $f=\infty$ on $\R\setminus\{0\}$. 
Let us now define convex functions $f_n$ by the formula $f_n(x):=|1+nx|$ 
for all natural $n$ and all $x\in\R$. 
Then $f_n\to f$ pointwise, whereas $m_n=0\not\to 1=m$. So, $f$ is not infimum-stable in the case $\card\dom f=1$ as well. 

By the assumption $\neg$(II), it remains to consider the case when 
$\card\dom f>1$, $f$ is monotonic, and $m>-\infty$. 
Then w.l.o.g.\ $f$ is nondecreasing (say), 
whence $f\big((-\infty)+\big)\le f(x)<\infty$ for any $x$ in the set $\dom f$, which is nonempty by the assumption 
$\card\dom f>1$, and so, 
$
	f\big((-\infty)+\big)<\infty. 
$ 
Therefore, defining convex functions $f_n$ by the formula $f_n(x):=f(x)+x/n$ for all natural $n$ and all $x\in\R$, one has 
$m_n\le f_n\big((-\infty)+\big)=f\big((-\infty)+\big)-\infty=-\infty$ for all $n$, which implies, in view of the assumption $m>-\infty$, that $m_n=-\infty\not\to m$, whereas   
$f_n\to f$ pointwise. 
%
It follows that  
$f$ is not infimum-stable in this remaining case as well. 

Thus, the proof of the implication  
(I)$\implies$(II) is completed, and so is the entire proof of Theorem~\ref{th:}. 
\end{proof}


\begin{proof}[Proof of Corollary~\ref{cor:fin,fin}]\ 
Consider first the implication 

(II)$\implies$(I).\quad Assume that the function $f\colon C\to[-\infty,\infty]$ is convex and satisfies the condition (II) of Corollary~\ref{cor:fin,fin}, and take any sequence $(f_n)$ of convex functions $f_n\colon C\to[-\infty,\infty]$ converging to $f$ pointwise. 
Extend the function $f$, defined on $C$, to the function $\tf$ as in \eqref{eq:tf}, 
and similarly extend $f_n$ to $\tf_n$, for each $n$. 
Then, on the entire real line $\R$, the functions $\tf$ and $\tf_n$ are convex, and $\tf_n\to\tf$ pointwise. Moreover, the condition (II) of Corollary~\ref{cor:fin,fin} implies that the condition (II) of Theorem~\ref{th:} is satisfied with the extended function $\tf$ in place of $f$ there; indeed, if $-\infty<c_{\min}<c_{\max}<\infty$ and $\card\dom f>1$, then $\tf$ is not monotonic. 
So, by Theorem~\ref{th:}, 
$\inf\limits_C f_n=\inf\limits_\R\tf_n\longrightarrow\inf\limits_\R\tf=\inf\limits_C f$. 
Thus,  the implication (II)$\implies$(I) in Corollary~\ref{cor:fin,fin} is verified. 

Consider now the reverse implication,  

(I)$\implies$(II).\quad Equivalently, let us verify the implication $\neg$(II)$\implies\neg$(I). 
Thus, let us assume that the condition (II) in Corollary~\ref{cor:fin,fin} fails to hold. We have then to show that $f$ is not infimum-stable. 

If $\card\dom f=0$, then $f=\infty$ on $C$. 
By horizontal shifting and the condition \eqref{eq:cmin<cmax}, w.l.o.g.\ the point $0$ is in the interior 
of the set $C$. 
Defining now convex functions $f_n$ by the formula $f_n(x):=
n|1+nx|$ for all natural $n$ and all $x\in C$, one sees that $f_n\to f$ pointwise, whereas eventually one has $-\frac1n\in C$ and hence $\inf\limits_C f_n=0\not\to\infty=\inf\limits_C f$. So, $f$ is not infimum-stable in the case $\card\dom f=0$. 

It remains to consider the case when $\card\dom f=1$ and $\inf\limits_C f>-\infty$. Then $\dom f$ is the singleton set $\{x_0\}$ for some $x_0\in C$, and $\inf\limits_C f=f(x_0)\in\R$. 
By \eqref{eq:cmin<cmax}, either $x_0>c_{\min}$ or $x_0<c_{\max}$. 
These two cases are quite similar to each other. So, assume w.l.o.g.\ that $x_0>c_{\min}$. 
Moreover, by vertical and/or horizontal shifting, w.l.o.g.\ $x_0=0$ and $f(x_0)=1$. 
So, $f(0)=1$, $f=\infty$ on $C\setminus\{0\}$, and $\inf C=c_{\min}<0\in C$. 
Let us now define convex functions $f_n$ by the formula $f_n(x):=|1+nx|$ 
for all natural $n$ and all $x\in C$. 
Then $f_n\to f$ pointwise, whereas 
eventually $\inf\limits_C f_n=0\not\to1=\inf\limits_C f$. 
So, $f$ is not infimum-stable whenever the condition (II) in Corollary~\ref{cor:fin,fin} fails to hold. 
\end{proof}

\begin{proof}[Proof of Corollary~\ref{cor:-inf,fin}]\ 
The proof of the implication 
(II)$\implies$(I) of this corollary repeats the corresponding part of the proof of Corollary~\ref{cor:fin,fin} almost literally.  
The main difference is that here --- instead of the conditions $-\infty<c_{\min}<c_{\max}<\infty$ and $\card\dom f>1$ --- one should use the conditions  $-\infty=c_{\min}<c_{\max}<\infty$ and $\card\dom f>1$ to find that the function $\tf$ is not nonincreasing and then conclude that $\tf$ is not monotonic (given that $f$ is not nondecreasing and hence $\tf$ is not nondecreasing). 
%

As for the reverse implication, (I)$\implies$(II), in Corollary~\ref{cor:-inf,fin}, its proof is almost literally the same as the corresponding part of the proof of Theorem~\ref{th:}. 
The differences are few and small: ``Theorem~\ref{th:}''; ``monotonic''; $\R$ as the domain of $f$ and $f_n$; $m$; and $m_n$ have to be replaced, respectively, by ``Corollary~\ref{cor:-inf,fin}''; ``nondecreasing''; $C$; $\inf\limits_C f$; and $\inf\limits_C f_n$. 
\end{proof}

\begin{proof}[Proof of Corollary~\ref{cor:fin,inf}]\ 
This proof is quite similar to that of Corollary~\ref{cor:-inf,fin}. 
Alternatively, Corollary~\ref{cor:fin,inf} can be obtained immediately from Corollary~\ref{cor:-inf,fin} by horizontal flipping. 
\end{proof}

%

\begin{proof}[Proof of Corollary~2a]\ 
Suppose that indeed $-\infty<c_{\min}<c_{\max}<\infty$, the function $f$ is convex, and $f>-\infty$ on $C$. 
In view of Corollary~\ref{cor:fin,fin}, it is enough to show that then $\inf\limits_C f>-\infty$. 
This conclusion is obvious if $\card\dom f=0$. If $\card\dom f=1$, then the condition $f>-\infty$ on $C$ implies that there is a point $x_0\in C$ such that $f(x_0)\in\R$ and $f=\infty$ on $C\setminus\{x_0\}$, so that again the conclusion $\inf\limits_C f>-\infty$ follows. 

Finally, if $\card\dom f>1$, then there is a point $x_0\in C$ such that $f$ is finite in a neighborhood of $x_0$. So, by the convexity of $f$, there is a finite right derivative, $f'_+(x_0)$, of $f$ at $x_0$. So, $f(x)\ge f(x_0)+f'_+(x_0)(x-x_0)$ for all $x\in C$ and therefore  
$\inf\limits_C f\ge\inf\limits_{x\in C}[f(x_0)+f'_+(x_0)(x-x_0)]>-\infty$, since the set $C$ is bounded. Thus, in all cases the conclusion $\inf\limits_C f>-\infty$ holds. 
\end{proof}


\bibliographystyle{abbrv}


\bibliography{C:/Users/Iosif/Dropbox/mtu/bib_files/citations12.13.12}

\end{document}